\theoremstyle{definition}
\theoremstyle{remark}
\theoremstyle{plain}
\newtheorem{theorem}{Theorem}
\newcommand{\mat}[1]{\begin{bmatrix}
 #1  \end{bmatrix}}
 \newcommand{\wt}{\widetilde}
\renewcommand{\le}{\leqslant}
\title{Perturbation analysis of a matrix differential equation $\dot x=ABx$}
\author{M. Isabel Garc{\'\i}a-Planas,  Tetiana Klymchuk\\ Departament de Matem\`{a}tiques,\\ Universitat Polit\'{e}cnica de Catalunya\\
Barcelona, SPAIN\\tetiana.klymchuk@upc.edu}
\begin{document}
 \maketitle

\abstract{
\normalsize \vspace*{2mm} {\small
Two complex matrix pairs $(A,B)$ and $(A',B')$ are contragrediently equivalent if there are  nonsingular $S$ and $R$ such that $(A',B')=(S^{-1}AR,R^{-1}BS)$. M.I. Garc\'{\i}a-Planas and V.V. Sergeichuk (1999) constructed a miniversal deformation of a canonical pair $(A,B)$ for contragredient equivalence; that is, a simple normal form to which all matrix pairs $(A + \wt A, B+\wt B)$ close to $(A,B)$
can be reduced by contragredient equivalence transformations that smoothly depend on the entries of $\wt A$ and $ \wt B$.
Each perturbation $(\wt A,\wt B)$ of $(A,B)$ defines the first order induced perturbation  $A\wt{B}+\wt{A}B$ of the matrix $AB$, which is the first order summand in the product $(A +\wt{A})(B+\wt{B}) = AB + A\wt{B}+\widetilde{A}B+ \wt A \wt B$.
We find all canonical matrix pairs $(A,B)$, for which the first order induced perturbations $A\wt{B}+\wt{A}B$ are nonzero for all nonzero perturbations in the normal form of Garc\'{\i}a-Planas and Sergeichuk. This problem arises in the theory of matrix differential equations $\dot x=Cx$, whose product of two matrices: $C=AB$; using the substitution $x = Sy$, one can reduce $C$ by similarity transformations
$S^{-1}CS$ and $(A,B)$ by contragredient equivalence transformations $(S^{-1}AR,R^{-1}BS)$.
}
}

\newcommand{\p}{\underline}
\newcommand{\ca}{\mathcal}
\renewcommand{\baselinestretch}{1}
\normalsize

\section{Introduction}

We study a matrix differential equation $\dot x=ABx$, whose matrix is a product of an $m \times n $ complex matrix $A$ and an $n \times m$ complex matrix $B$. It is equivalent to $\dot y=S^{-1}ARR^{-1}BSy$, in which $S$ and $R$ are nonsingular matrices and $x = Sy$.
Thus, we can reduce $(A,B)$ by  {\it transformations of contragredient equivalence}
\begin{equation}\label{eq1}
(A,B)\mapsto
(S^{-1}AR,R^{-1}BS), \qquad S \text{ and } R \text{ are nonsingular.}
\end{equation}
The canonical form of $(A,B)$ with respect to these transformations was obtained by Dobrovol$'$skaya and Ponomarev \cite{dob+pon} and,
independently, by Horn and Merino\cite{horn+merino}:
\begin{equation}\label{cf}
\parbox[c]{0.80\textwidth}{each
pair $(A,B)$ is contragrediently equivalent to
a direct sum, uniquely determined up to permutation of summands, of pairs of the types
$
(I_r,J_r(\lambda)),\
(J_r(0),I_r), \
(F_r,G_{r}),\
(G_{r},F_{r}),
$}
\end{equation}
in which $r=1,2, \dots,$
\begin{equation*}
J_r(\lambda):=\begin{bmatrix}
\lambda&1&&0\\[-2mm]
&\lambda&\ddots&\\[-2mm]
&&\ddots&1\\
0&&&\lambda
\end{bmatrix} \ (\lambda \in \mathbb C),
\end{equation*}
\begin{equation*}
F_r:=\begin{bmatrix}
1&&0\\[-6pt]0&\ddots&\\[-6pt]&\ddots&1\\0&&0
\end{bmatrix}, \qquad
G_r:=\begin{bmatrix}
0&1&&0\\[-6pt]&\ddots&\ddots\\[-3pt]0&&0&1
\end{bmatrix}
\end{equation*}
are $r \times r$, $r \times (r-1)$, $(r-1) \times r$ matrices,
and
\[(A_1,B_1) \oplus (A_2,B_2):=(A_1 \oplus A_2, B_1 \oplus B_2).\]
Note that $(F_1,G_1)=(0_{10},0_{10})$;
we denote by $0_{mn}$ the zero matrix
of size $m\times n$, where
$m,n\in\{0,1,2,\dots\}$.
All matrices that we consider are complex matrices. All matrix pairs that we consider are counter pairs: a matrix pair $(A,B)$ is a {\it counter pair} if
$A$ and $B^T$ have the same size.

A notion of miniversal deformation was introduced by Arnold \cite{Arn1,Arn3}. He constructed a miniversal deformation of a Jordan matrix $J$; i.e., a simple normal form to which all matrices $J + E$ close to $J$
can be reduced by similarity transformations that smoothly depend
on the entries of $E$. Garc\'{\i}a-Planas and Sergeichuk \cite{gar+ser} constructed a miniversal deformation of a canonical pair \eqref{cf} for contragredient equivalence \eqref{eq1}.

For a counter matrix pair $(A,B)$, we consider all matrix pairs $(A+\wt A,B+\wt B)$ that are sufficiently close to $(A,B)$. The pair $(\wt A,\wt B)$ is called a \emph{perturbation} of $(A,B)$. Each perturbation $(\wt A,\wt B)$ of $(A,B)$ defines the \emph{induced perturbation}  $A\wt{B}+\wt{A}B +\widetilde{A}\widetilde{B}$ of the matrix $AB$ that is obtained as follows:
\[
(A +\wt{A})(B+\wt{B}) = AB + A\wt{B}+\widetilde{A}B + \widetilde{A}\widetilde{B}.
\]
Since $\wt A$ and $\wt B$ are small, their product $\wt A\wt B$ is ``very small''; we ignore it and consider only \emph{first order induced perturbations} $A\wt{B}+\wt{A}B$ of $AB$.

In this paper, we describe all canonical matrix pairs $(A,B)$ of the form \eqref{cf}, for which the first order induced perturbations $A\wt{B}+\wt{A}B$ are nonzero for all miniversal perturbations  $(\wt A, \wt B) \neq 0$ in the normal form defined in \cite{gar+ser}.

Note that $z=ABx$ can be considered as the superposition of the systems $ y=Bx$ and $ z=Ay$:

\[x \longrightarrow \boxed{ \ \ B \ \ } \xrightarrow{\ \ y \ \ }  \boxed{ \ \ A \ \ } \longrightarrow z \qquad \text{implies} \qquad x \longrightarrow \boxed{ \ \ AB \ \ }\longrightarrow z\]

\section{Miniversal deformations of counter matrix pairs}


In this section, we recall the miniversal deformations of canonical pairs \eqref{cf} for contragredient equivalence constructed by Garc\'{\i}a-Planas and Sergeichuk \cite{gar+ser}.


Let $(A,B)=$
\begin{equation}           \label{4.1}
(I,C)\oplus
\bigoplus_{j=1}^{t_1}
(I_{r_{1j}},J_{r_{1j}})
\oplus\bigoplus_{j=1}^{t_2}(J_{r_{2j}},I_{r_{2j}})
\oplus\bigoplus_{j=1}^{t_3}(F_{r_{3j}},G_{r_{3j}})
\oplus\bigoplus_{j=1}^{t_4}(G_{r_{4j}},F_{r_{4j}})
\end{equation}
 be a canonical pair for contragredient equivalence,
 in which
 \[C :=  \bigoplus_{i=1}^t \Phi({\lambda_i}), \qquad \Phi({\lambda_i}):=J_{m_{i1}}(\lambda_i)\oplus  \dots \oplus J_{m_{ik_i}}(\lambda_i)
 \qquad \text{ with $\lambda_i \neq \lambda_j$ if $i \neq j$,}\]
  $ m_{i1}\le m_{i2}\le \dots \le m_{ik_{i}}$, and
$r_{i1}\le r_{i2}\le\dots\le
r_{it_i}$.

For each matrix pair $(A,B)$ of the form \eqref{4.1}, we define the matrix pair
$\Big(I,\bigoplus_i(\Phi({\lambda_i})+{N})\Big)
\oplus$
\begin{equation}\label{miniv}
\left(\left[
\begin{array}{c|c|c}
\oplus_j I_{r_{1j}}&0&0
          \\\hline
0&\oplus_j J_{r_{2j}}(0)+N&N
        \\ \hline
0&N&
\begin{matrix}
P_3&N\\0&Q_4
\end{matrix}
\end{array}\right],
\left[\begin{array}{c|c|c}
  \oplus_j J_{r_{1j}}(0)+N &N & N\\  \hline
N & \oplus_j I_{r_{2j}} &0\\       \hline
  N &0&\begin{matrix}
                Q_3&0\\
                N&P_4
              \end{matrix}
                 \end{array}\right]\right),
\end{equation}
of the same size and of the same partition of the blocks,
in which
\begin{equation}       \label{5.0}
{N}:=[H_{ij}]
\end{equation}
 is a parameter block
matrix with $p_i\times q_j$
blocks $H_{ij}$ of the form
\begin{equation} \label{der}
H_{ij}:=\left[
\begin{tabular}{cc}
             $*$& \\[-2.5mm]
             $\vdots$&\!\!\!\Large 0\\[-2mm]
             $*$&
\end{tabular}
\right] \  {\rm if}\ p_i \le q_j,
\qquad
H_{ij:}=\left[\begin{tabular}{c}
                       \Large 0 \\[-1mm]
                                          $\!\!* \cdots *\!\!$
 \end{tabular}\right]
\  {\rm if}\ p_i>q_j
\end{equation}
(we usually write $H_{ij}$ without indexes),
\begin{equation}\label{PQ}
P_l:=\mat{
F_{r_{l1}}+H &H&\cdots &H
\\
 &F_{r_{l2}}+H&\ddots &\vdots\\
    &&\ddots &H\\
         0&&& F_{r_{lt_l}}+H}
 ,\qquad
Q_l:=\mat{
G_{r_{l1}}&&& 0 \\
      H&G_{r_{l2}}&& \\
      \vdots&\ddots&\ddots&\\
      H&\cdots&H&G_{r_{lt_l}}} 
\end{equation}
$(l=3,\,4)$, $N$ and
$H$ are matrices of the form
(\ref{5.0}) and (\ref{der}), and the
stars denote independent
parameters.

\begin{theorem}[see \cite{gar+ser}] Let $(A,B)$ be the canonical pair \eqref{4.1}. Then all matrix pairs
$(A+\wt A, B+\wt B)$ that are sufficiently close to $(A,B)$ are simultaneously reduced by some
transformation
\begin{equation*}\label{trans}
(A+\wt A, B+\wt B) \mapsto (S^{-1}(A+\wt A)R,R^{-1}(B+\wt B)S),
\end{equation*}
in which $S$ and $R$ are matrix functions that depend holomorphically on the entries of $\wt A$ and $\wt B$, $S(0)=I$, and $R(0)=I$,
to the form \eqref{miniv}, whose stars are replaced by complex numbers
that depend holomorphically on the entries of $\wt A$ and $\wt B$. The number of stars is
minimal that can be achieved by such transformations.
\end{theorem}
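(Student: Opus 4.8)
The plan is to derive the theorem from Arnold's transversality principle \cite{Arn1,Arn3}. Let $G:=GL_m(\mathbb C)\times GL_n(\mathbb C)$ act holomorphically on $V:=\operatorname{Mat}_{m\times n}(\mathbb C)\times\operatorname{Mat}_{n\times m}(\mathbb C)$ by contragredient equivalence, and let $\mathcal O$ be the orbit of the canonical pair \eqref{4.1}. For a linear subspace $D\subseteq V$, the family $d\mapsto(A,B)+d$ $(d\in D)$ is a versal deformation of $(A,B)$ exactly when $D+T=V$, where $T:=T_{(A,B)}\mathcal O$, and it is miniversal exactly when $D\oplus T=V$; in the latter case $\dim D=\operatorname{codim}\mathcal O$ is the smallest possible number of parameters. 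Granting such a $D$, the holomorphic transforming matrices of the theorem are produced by the holomorphic implicit function theorem applied to $\Phi\colon G\times D\to V$, $\Phi\bigl((S,R),d\bigr):=\bigl(S(A+d_A)R^{-1},\,R(B+d_B)S^{-1}\bigr)$: its differential at $\bigl((I,I),0\bigr)$ sends $(X,Y,d)$ to $\bigl(XA-AY+d_A,\,YB-BX+d_B\bigr)$ and is onto $T+D=V$, so $\Phi$ has a local holomorphic section near $(A,B)$; for a pair $(A+\wt A,B+\wt B)$ close to $(A,B)$ this section returns matrices $S,R$ with $S(0)=R(0)=I$ and an element $d\in D$ such that $(S^{-1}(A+\wt A)R,\,R^{-1}(B+\wt B)S)=(A+d_A,\,B+d_B)$, all depending holomorphically on $\wt A,\wt B$. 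Thus everything reduces to the linear-algebra statement that the subspace $D$ spanned by the pairs in \eqref{miniv} obtained by setting one star to $1$ and the rest to $0$ is a direct complement of $T$ in $V$.

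The next step is to make $T$ explicit. Differentiating $(S,R)=(I+tX,I+tY)$ gives
\[
T=\bigl\{(AY-XA,\ BX-YB)\ :\ X\in\operatorname{Mat}_{m\times m}(\mathbb C),\ Y\in\operatorname{Mat}_{n\times n}(\mathbb C)\bigr\},
\]
and pairing against $X$ and $Y$ in the Frobenius inner product yields the orthogonal complement
\[
T^{\perp}=\bigl\{(\wt A,\wt B)\ :\ A^{*}\wt A=\wt BB^{*},\ \ B^{*}\wt B=\wt AA^{*}\bigr\}.
\]
Although $T^{\perp}$ already gives \emph{some} miniversal deformation, reaching the specific block shape \eqref{miniv} is more transparent if one works with $T$ itself and checks the two inclusions separately. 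For $D\cap T=0$ one supposes a pair of $D$ has the form $(AY-XA,BX-YB)$ and, proceeding block by block with the canonical shapes $I_r$, $J_r(\lambda)$, $F_r$, $G_r$, shows that the positions carrying stars in \eqref{der}--\eqref{PQ} are precisely those that cannot be produced by any such coboundary, which forces the pair to vanish. For $D+T=V$ one runs a Gauss-type reduction: given an arbitrary $(\wt A,\wt B)$, the free entries of $X$ and $Y$ are used in turn to annihilate every entry of $(\wt A,\wt B)$ outside the star positions, so that $(\wt A,\wt B)$ equals an element of $D$ modulo $T$. Minimality is then confirmed by the count $\dim D=\operatorname{codim}\mathcal O=2mn-m^{2}-n^{2}+\dim\mathfrak g_{(A,B)}$, in which $\mathfrak g_{(A,B)}=\{(X,Y):XA=AY,\ YB=BX\}$ is the stabiliser (the endomorphism algebra of the counter pair $(A,B)$); for the direct sum \eqref{4.1} this dimension splits as a sum, over ordered pairs of summands, of the dimensions of the $\operatorname{Hom}$-spaces between the indecomposables, which are read off from the integers $m_{ij}$, $r_{ij}$ and the scalars $\lambda_i$ exactly as in the classification \eqref{cf}, and one checks that it matches the number of stars in \eqref{miniv}.

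I expect the main obstacle to be this block-by-block bookkeeping, and in particular the interaction between the four types of summands in \eqref{4.1}. The blocks coming from $(I,J_r(\lambda))$ and from $(J_r(0),I)$ behave essentially as in Arnold's similarity problem, with the shape \eqref{der} of $N$ in the role of his normalisation of a centraliser; but the summands $(F_r,G_r)$ and $(G_r,F_r)$ carry the ``singular'' Kronecker part of the pencil associated with $(A,B)$, and the triangular $H$-fill of $P_l$ and $Q_l$ in \eqref{PQ} is exactly what is needed to separate the coboundary directions from the transversal ones both for these blocks and for their cross-interactions with the regular ones. Verifying that \eqref{miniv} records all of these interactions and nothing superfluous---that after the reduction no further star can be deleted---is the delicate point, and the dimension count of the previous paragraph is the safeguard that turns the (a priori merely versal) reduction into a genuinely minimal one.
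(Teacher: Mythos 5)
The paper itself gives no proof of this theorem --- it is quoted from \cite{gar+ser} --- and your strategy is precisely the one used in that reference: Arnold's transversality criterion, the tangent space $T=\{(AY-XA,\,BX-YB)\}$ to the contragredient-equivalence orbit, the implicit-function-theorem argument producing the holomorphic $S,R$ with $S(0)=R(0)=I$, and the characterisation of miniversality as $D\oplus T=V$ with $\dim D=\operatorname{codim}\mathcal O$. That scaffolding is correct, including your computation of $T^{\perp}$ and the codimension formula $2mn-m^2-n^2+\dim\mathfrak g_{(A,B)}$.

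Nevertheless there is a genuine gap: the entire content of the theorem is the claim that the \emph{specific} star pattern of \eqref{der}--\eqref{PQ} assembled into \eqref{miniv} spans a complement of $T$, and this is exactly the step you leave as ``proceeding block by block \dots\ shows that'' and ``one checks that it matches the number of stars''. The general theory only guarantees that \emph{some} complement of dimension $\operatorname{codim}\mathcal O$ exists (for instance $T^{\perp}$); it says nothing about the particular shape \eqref{miniv}, and the passage from $T^{\perp}$ to that shape is not a formality --- it is the whole point of \cite{gar+ser}. To close the gap you would have to either (i) exhibit, for each ordered pair of indecomposable summands of \eqref{4.1}, an explicit reduction of an arbitrary perturbation to the star positions by coboundaries $(AY-XA,BX-YB)$ together with a verification that no star can be removed, or (ii) establish $D+T=V$ by such a reduction and then compute $\dim\mathfrak g_{(A,B)}$ from the Hom-space dimensions between the indecomposables $(I,J_r(\lambda))$, $(J_r(0),I)$, $(F_r,G_r)$, $(G_r,F_r)$, checking that the total equals the number of stars in \eqref{miniv}. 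Neither computation is performed, so what you have is a correct and well-organised plan rather than a proof.
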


\section{Main theorem}
Each matrix pair $(A+\wt A,B+\wt B)$ of the form \eqref{miniv}, in which the stars are complex numbers, we call a {\it miniversal normal pair} and $(\wt A, \wt B)$ a {\it
miniversal perturbation} of $(A,B)$.

The following theorem is the main result of the paper.
\begin{theorem}\label{prop}
Let $(A,B)$ be a canonical pair \eqref{cf}. Then
$A\wt{B}+\wt{A}B \neq 0$ for all nonzero
miniversal perturbations $(\tilde A,\tilde B)$ if and only if
the following inequalities hold:
\label{theorem}
\begin{equation}\label{theorem}
    r_{1t_1} < r_{21} \text{if} t_1t_2 \ne 0,
\end{equation}
\[r_{2t_2} < r_{41} \text{if} t_2t_4 \ne 0\]
\[r_{1t_1} < r_{41} \text{if} t_1t_4 \ne 0\] and
\[r_{3t_3} < r_{41} \text{if} t_3t_4 \ne 0\]

\end{theorem}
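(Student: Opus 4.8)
The plan is to exploit the block-diagonal structure of the canonical pair. Decompose $(A,B)$ into its canonical summands and index them by $\alpha,\beta,\dots$; I will call the summands in the four families $\bigoplus_j(I_{r_{1j}},J_{r_{1j}})$, $\bigoplus_j(J_{r_{2j}},I_{r_{2j}})$, $\bigoplus_j(F_{r_{3j}},G_{r_{3j}})$, $\bigoplus_j(G_{r_{4j}},F_{r_{4j}})$ of \eqref{4.1} the blocks of type $1,2,3,4$, and gather the $J_r(\lambda)$-blocks into the ``regular part''. Since $A$ and $B$ are block-diagonal for this decomposition, the $(\alpha,\beta)$ block of the first order induced perturbation is
\[
\bigl(A\wt B+\wt A B\bigr)_{\alpha\beta}=A_\alpha\,\wt B_{\alpha\beta}+\wt A_{\alpha\beta}\,B_\beta ,
\]
where $A_\alpha,B_\beta$ are the canonical blocks and $\wt A_{\alpha\beta},\wt B_{\alpha\beta}$ the corresponding blocks of the miniversal perturbation. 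Hence the linear map $(\wt A,\wt B)\mapsto A\wt B+\wt A B$ on the parameter space of \eqref{miniv} splits as a direct sum over the positions $(\alpha,\beta)$, and $A\wt B+\wt A B\neq0$ for every nonzero miniversal perturbation if and only if, for each position $(\alpha,\beta)$ in which \eqref{miniv} carries free parameters, the map $\Lambda_{\alpha\beta}\colon(\wt A_{\alpha\beta},\wt B_{\alpha\beta})\mapsto A_\alpha\wt B_{\alpha\beta}+\wt A_{\alpha\beta}B_\beta$ is injective on the admissible blocks there.

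The second step is to run through the positions according to the ordered pair of types of $\alpha$ and $\beta$, using the explicit form \eqref{miniv}, \eqref{PQ}. Inspecting it, in every parameter-bearing position exactly one of $\wt A_{\alpha\beta},\wt B_{\alpha\beta}$ is a free block $H$ of shape \eqref{der} and the other is zero, so $\Lambda_{\alpha\beta}$ is either $H\mapsto A_\alpha H$ or $H\mapsto HB_\beta$ with $A_\alpha$ and $B_\beta$ among $I_r,J_r(0),F_r,G_r$. Left or right multiplication by $I_r$ is an isomorphism, left multiplication by $F_r$ appends a zero row, and right multiplication by $G_r$ prepends a zero column, so these never destroy injectivity; on the other hand left multiplication by $J_r(0)$ or $G_r$ deletes the first row of $H$, and right multiplication by $J_r(0)$ or $F_r$ deletes the last column of $H$. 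Therefore $\Lambda_{\alpha\beta}$ fails to be injective precisely when a free entry of $H$ falls in that deleted row or column, and whether this happens is decided, through the rule in \eqref{der} (free first column if $H$ has no more rows than columns, free last row otherwise), by the relative sizes of $\alpha$ and $\beta$. Carrying this out, the only positions at which injectivity can fail are those with $\alpha$ of type $2$ and $\beta$ of type $1$, $\alpha$ of type $4$ and $\beta$ of type $1$, $\alpha$ of type $4$ and $\beta$ of type $3$, and $\alpha$ of type $2$ and $\beta$ of type $4$; every other position --- in particular all positions meeting the regular part, all within-type positions, and all reverse cross positions --- is automatically injective.

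The last step is to read off the size conditions. At each of the four distinguished positions, $\Lambda_{\alpha\beta}$ is non-injective exactly when the ``upper'' block is at least as large as the ``lower'' one, so requiring injectivity at all of them (for all relevant pairs of blocks) is equivalent to $r_{1t_1}<r_{21}$ when $t_1t_2\neq0$, $r_{1t_1}<r_{41}$ when $t_1t_4\neq0$, $r_{3t_3}<r_{41}$ when $t_3t_4\neq0$, and $r_{2t_2}<r_{41}$ when $t_2t_4\neq0$ --- which is the assertion of the theorem. The heart of the argument, and the place demanding care, is the second step: one must pin down exactly which blocks of \eqref{miniv}, \eqref{PQ} carry free parameters, track the precise shape \eqref{der} of each $H$-block through the one-sided multiplication by $J_r(0)$, $F_r$, or $G_r$ so that the inequalities come out strict, and dispose separately of the degenerate summands of size $1$, for which $F_1$ and $G_1$ are empty matrices.
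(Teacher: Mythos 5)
Your overall strategy is the same as the paper's: split off the regular part, use the fact that distinct blocks of the miniversal perturbation carry disjoint parameter sets to reduce the problem to the injectivity of the one-sided multiplications $H\mapsto A_\alpha H$ and $H\mapsto HB_\beta$ on the individual parameter blocks of shape \eqref{der}, and then translate injectivity into size inequalities. The framework of your first step is sound, and your catalogue of which multiplications are harmless ($I_r$ on either side, $F_r$ on the left, $G_r$ on the right) and which delete a row or a column is correct.

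The gap is that the entire content of the proof lives in your second step, and there you only assert the outcome; where you do commit to specifics, the assertions are not uniformly right. First, ``non-injective exactly when the upper block is at least as large as the lower one'' fails for the positions handled by right multiplication: $H\mapsto HF_r$ deletes the \emph{last} column, and when $H$ is square of size at least $2$ it has, by \eqref{der}, a free \emph{first} column, so nothing is lost; hence the (type~2, type~4) position by itself only forces $r_{2i}\le r_{4j}$, and the strict inequality $r_{2t_2}<r_{41}$ of \eqref{theorem} does not drop out of that position the way you claim (the paper's Case~2 elides the same equality case). For (type~3, type~4) the strict inequality does come out, but from the position $\overline{Q}_4N'_{43}$ (a left multiplication by $G$), not from $N_{34}\overline{P}_4$; you must identify which position is binding. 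Second, the claim that all within-type positions are automatically injective is not obvious for type~4: there $\underline{Q}_4\overline{P}_4$ is a right multiplication by $F$ and $\overline{Q}_4\underline{P}_4$ a left multiplication by $G$, both of which delete data, so the off-diagonal $H$-blocks of $\underline{P}_4$ and $\underline{Q}_4$ require the same size bookkeeping as the cross-type positions, and the degenerate blocks with $r_{4j}=1$ (where $F_1$ and $G_1$ are empty) genuinely cause parameters to vanish. You correctly flag strictness and the size-one blocks as the delicate points, but the proof has to resolve them rather than defer them; as written it does not.
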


\begin{proof}
We write $J_r := J_r(0).$ Since the deformation \eqref{miniv} is the direct sum of
$\Big(I,\bigoplus_i(\Phi({\lambda_i})+{N})\Big)$
and
\begin{equation*}
\left(\left[
\begin{array}{c|c|c}
\oplus_j I_{r_{1j}}&0&0
          \\\hline
0&\oplus_j J_{r_{2j}}+N&N
        \\ \hline
0&N&
\begin{matrix}
P_3&N\\0&Q_4
\end{matrix}
\end{array}\right],
\left[\begin{array}{c|c|c}
  \oplus_j J_{r_{1j}}+N &N & N\\  \hline
N & \oplus_j I_{r_{2j}} &0\\       \hline
  N &0&\begin{matrix}
                Q_3&0\\
                N&P_4
              \end{matrix}
                 \end{array}\right]\right),
\end{equation*}
it is sufficient to consider $(A,B)$ equals
\begin{equation}\label{qte}
\Big(I,\bigoplus_i\Phi({\lambda_i})\Big)
\quad \text{or} \quad
\bigoplus_{j=1}^{t_1}
(I_{r_{1j}},J_{r_{1j}})
\oplus\bigoplus_{j=1}^{t_2}(J_{r_{2j}},I_{r_{2j}})
\oplus\bigoplus_{j=1}^{t_3}(F_{r_{3j}},G_{r_{3j}})
\oplus\bigoplus_{j=1}^{t_4}(G_{r_{4j}},F_{r_{4j}}).
\end{equation}

Let first $(A,B)=\big(I,\bigoplus_i\Phi({\lambda_i})\big)$. Then
$(A+\widetilde{A},B+\widetilde{B})=$
\begin{equation*}
\begin{array}{l}
\left
(\left[
              \begin{tabular}{c|c|c}
                  $\oplus_j I_{r_{1j}}$&0&0\\       \hline
                   0&$\ddots$&$0$\\       \hline
                  0&0&$\oplus_j I_{r_{lj}}$
                 \end{tabular}\right],\right . \left .
         \left[\begin{tabular}{c|c|c}
  $\oplus_j J_{r_{1j}}(\lambda_1)+N$&$0$&$0$\\  \hline
  $0$&$\ddots$&$0$\\       \hline
  $0$&$0$&$\oplus_j J_{r_{lj}}(\lambda_l)+N$
                 \end{tabular}\right]
\right).
\end{array}
\end{equation*}
Since
\begin{equation*}
\widetilde{A}B+A\widetilde{B}=A\widetilde{B}=
\begin{array}{l}
         \left[\begin{tabular}{c|c|c}
  $N$&0&0\\  \hline
  0&$\ddots$&$0$\\       \hline
  $0$&$0$&$N$
                 \end{tabular}\right]
\end{array},
\end{equation*}
in which all $N$ have independent parameters,
we have that $\widetilde{A}B+A\widetilde{B}=0$ if and only if all $N$ are zero, that is $(\wt A, \wt B)=(0,0)$.
\medskip

It remains to consider $(A,B)$ equaling the second pair in \eqref{qte}.
Write  the matrices  \eqref{PQ} as follows:
\[P_l = \overline{P}_l+\underline{P}_l, \qquad Q_l = \overline{Q}_l+\underline{Q}_l, \quad \text{ in which } \ l=3,\,4,\]
\begin{align*}
&\overline{P}_l=\left[\!\!\!\begin{tabular}{cccc}
$F_{r_{l1}}$&0&$\cdots$ &0
\\
 &$F_{r_{l2}}$&$ \ddots $&$\vdots$\\
    &&$\ddots $&0\\
         {0}&&& $F_{r_{lt_l}}$
         \end{tabular}\!\!\!\right],
&&\underline{P}_l=\left[\!\!\!\begin{tabular}{cccc}
$H_{r_{l1}}$&$H$&$\cdots$ &$H$
\\
 &$H_{r_{l2}}$&$ \ddots $&$\vdots$\\
    &&$\ddots $&$H$\\
         {0}&&& $H_{r_{lt_l}}$
         \end{tabular}\!\!\!\right],
                                                            \\
&\overline{Q}_l=\left[\!\!\!\begin{tabular}{cccc}
$G_{r_{l1}}$&&& { 0} \\
      0&$ G_{r_{l2}}$&& \\
      $\vdots$&$\ddots$&$\ddots$&\\
      0&$\cdots$&0&$ G_{r_{lt_l}}$
         \end{tabular}\!\!\!\right],
&&\underline{Q}_l=\left[\!\!\!\begin{tabular}{cccc}
$0_{r_{l1}}$&&& { 0} \\
      $H$&$ 0_{r_{l2}}$&& \\
      $\vdots$&$\ddots$&$\ddots$&\\
      $H$&$\cdots$&$H$&$ 0_{r_{lt_l}}$
         \end{tabular}\!\!\!\right],
\end{align*}
$N$ and
$H$ are matrices of the form
(\ref{5.0}) and \eqref{der}, and the
stars denote independent
parameters.

Write
\begin{equation}\label{sdf}
    \Psi_1:=\oplus_j J_{r_{1j}}(0), \qquad \Psi_2:=\oplus_j J_{r_{2j}}(0).
\end{equation}
Then
\begin{align*}
&{A}=
       \left[      \begin{tabular}{c|c|cc}
                  $I$&0&0&0\\       \hline
  0&$\Psi_2$&0&0\\       \hline
                  0&0&$\overline{P}_3 \rule{0pt}{10.5pt}$&0 \\
                  0&0&0&$\overline{Q}_4$ \\
                 \end{tabular}\right],
&&\widetilde{A}=
       \left[      \begin{tabular}{c|c|cc}
                  $0$&0&0&0\\       \hline
  0&${N_{22}}$&$N_{23}$&${N_{24}}$\\       \hline
                  0&${N_{32}}$&$\underline{P}_3$&${N_{34}}$\\
                  0&${N_{42}}$&0&$\underline{Q}_4$
                 \end{tabular}\right],\\
&B=
       \left[      \begin{tabular}{c|c|cc}
                  $\Psi_1$&0&0&0\\       \hline
  0&$I$&0&0\\       \hline
                  0&0&$\overline{Q}_3 \rule{0pt}{10.5pt}$&0 \\
                  0&0&0&$\overline{P}_4$ \\
                 \end{tabular}\right],
&&\widetilde{B}=
  \left[      \begin{tabular}{c|c|cc}
  ${N'_{11}}$&${N'_{12}}$&${N'_{13}}$&${N'_{14}}$\\       \hline
  ${N'_{21}}$&$0$&$0$&$0$\\       \hline
  ${N'_{31}}$&0&$\underline{Q}_3$&$0$\\
  ${N'_{41}}$&0&${N'_{43}}$&$\underline{P}_4$
                 \end{tabular}\right],
                                                                \\
&A\widetilde{B}=
       \left[      \begin{tabular}{c|c|cc}
  ${N'_{11}}$&${N'_{12}}$&${N'_{13}}$&${N'_{14}}$\\       \hline
  $\Psi_2{N'_{21}}$&0&0&0\\       \hline
  $\overline{P}_3{N'_{31}}\rule{0pt}{10.5pt}$&0&$ \overline{P}_3\underline{Q}_3$&0 \\
  $\overline{Q}_4{N'_{41}}$&0& $\overline{Q}_4{N'_{43}}$&$\overline{Q}_4\underline{P}_4$ \\
                 \end{tabular}\right],
&&
\widetilde{A}B=
       \left[      \begin{tabular}{c|c|cc}
   $0$&$0$&$0$&$0$\\      \hline
  $0$&${N_{22}}$&${N_{23}}\overline{Q}_3\rule{0pt}{10.5pt}$&${N_{24}}\overline{P}_4$\\       \hline
  $0$&${N_{32}}$&$\underline{P}_3\overline{Q}_3$&${N_{34}}\overline{P}_4\rule{0pt}{10.5pt}$\\
  $0$&${N_{42}}$&0&$\underline{Q}_4 \overline{P}_4$ \\
                 \end{tabular}\right],
\end{align*}
in which $N_{ij}$ and $N'_{ij}$ are blocks of the form \eqref{5.0}. All these blocks have distinct sets of independent parameters and may have distinct sizes.

Since $\wt AB$ and $A \wt B$ have independent parameters for each $(A,B)$, we should prove that $\wt AB \neq 0$ for all $\wt A \neq 0$ and $\wt BA \neq 0$ for all $\wt B \neq 0$. Thus, we should prove that
\begin{equation}\label{pairs}
\Psi_2{N'_{21}}, \quad N_{23}\overline{Q}_3, \quad {N_{24}}\overline{P}_4, \quad \overline{P}_3{N'_{31}},  \quad {N_{34}}\overline{P}_4, \quad
\overline{Q}_4{N'_{41}}, \quad \overline{Q}_4{N'_{43}}
\end{equation}
are nonzero if the corresponding parameter blocks $N_{ij}$ and $N'_{ij}$ are nonzero.

{\it Case 1: consider the matrix}
\begin{equation*}
\Psi_2N'_{21}=
\left[\begin{array}{ccc}
J_{r_{21}}(0)& &0\\[-4pt]
    &\ddots&\\
         {0}&&J_{r_{2t_2}}(0)
         \end{array}\right]
\left[\begin{array}{ccc}
H_{r_{21}r_{11}}&\dots&H_{r_{21}r_{1t_1}}\\[-4pt]
    \dots&\dots&\dots\\
         H_{r_{2t_2}r_{11}}&\dots& H_{r_{2t_2}r_{1t_1}}
         \end{array}\right]
=
 \end{equation*}
         \begin{equation*}
\left[\begin{array}{ccc}
J_{r_{21}}(0)H_{r_{21}r_{11}}& \dots &J_{r_{21}}(0)H_{r_{21}r_{1t_1}}\\[-4pt]
 \dots   &\dots&\dots\\
         J_{r_{2t_2}}(0)H_{r_{2t_2}r_{11}}&\dots&J_{r_{2t_2}}(0)H_{r_{2t_2}r_{1t_1}}
         \end{array}\right]
\end{equation*}
in which $r_{11} \leqslant r_{12}\leqslant \dots \leqslant r_{1t_1}$ and $r_{21} \leqslant r_{22}\leqslant \dots \leqslant r_{2t_2}$.

The matrix $N'_{21}$ is contained in the following submatrix of $A \wt  B$:
\begin{equation*}
         \left[\begin{array}{ccc|ccc}
  J_{r_{11}}(0)&&0&&&\\
  &\ddots&&&0&\\
  0&&J_{r_{1t_1}}(0)&&& \\ \hline
                   H_{r_{21}r_{11}}&\dots&H_{r_{21}r_{1t_1}}&I_{r_{21}}&&0\\
                   \vdots&&\vdots&&\ddots\\
                    H_{r_{2t_2}r_{11}}&\dots&H_{r_{2t_2}r_{1t_1}}&0&&I_{r_{2t_2}}\\
                 \end{array}\right].
\end{equation*}

Each $H_{r_{2i}r_{1j}}$ has the form
\begin{equation*}
\left[
\begin{tabular}{cc}
             $\alpha_{r_{21}}$& \\
             $\vdots$&\!\!\!\Large 0\\
             $\alpha_{r_{2i}}$&
\end{tabular}
\right] \text{ if } r_{2i} \le r_{1j}, \qquad
\left[\begin{tabular}{c}
                        \\
                       \Large 0 \\[-2mm]
                       \\[-1mm]
                       $\alpha_{r_{11}} \ \cdots \ \alpha_{r_{1j}}$
 \end{tabular}\right] \text{ if } r_{2i} > r_{1j}.
\end{equation*}
Correspondingly, $J_{r_{1j}}H_{r_{2i}r_{1j}}$ is
\begin{equation*}
\left[
\begin{tabular}{cc}
             $\alpha_{r_{22}}$& \\
             $\vdots$&\!\!\!\Large 0\\
             $\alpha_{r_{2i}-1}$&\\
             0
\end{tabular}
\right]
\text{ if } r_{2i} \le r_{1j}, \qquad
 \left[\begin{tabular}{c}
                        \\
                       \Large 0 \\[-2mm]
                       \\[-1mm]
                       $\alpha_{r_{11}} \ \cdots \ \alpha_{r_{1j}}$\\
                       $0 \ \ \cdots \ \ 0$
 \end{tabular}\right] \text{ if } r_{2i} > r_{1j}.
\end{equation*}
We see that $\alpha_{r_{21}}$ disappears if $r_{2i} \le r_{1j}$ and all parameters remain if $r_{2i} > r_{1j}$,
thus we get the inequalities $r_{11} \leqslant \dots \leqslant r_{1t_1} < r_{21} \leqslant \dots \leqslant r_{2t_2}$, which
gives the first inequality in \eqref{theorem}.
\medskip

{\it Case 2: consider the matrix}
\begin{equation*}
N_{24}\overline{P}_4=
\left[\begin{array}{ccc}
H_{r_{21}r_{41}}&\dots&H_{r_{21}r_{4t_4}}\\[-4pt]
    \dots&\dots&\dots\\
         H_{r_{2t_2}r_{41}}&\dots& H_{r_{2t_2}r_{4t_4}}
         \end{array}\right]
         \left[\begin{array}{ccc}
F_{r_{41}}& &0\\[-4pt]
    &\ddots&\\
         {0}&&F_{{r_{4t_4}}}
         \end{array}\right]
         \end{equation*}
         \begin{equation*}
=
\left[\begin{array}{ccc}
H_{r_{21}r_{41}}F_{r_{41}}&\dots &H_{r_{21}r_{4t_4}}F_{r_{4t_4}}\\[-4pt]
    \dots&\dots&\dots\\
        H_{r_{2t_2}r_{41}}F_{r_{{41}}}&\dots&H_{r_{2t_2}r_{4t_4}}F_{{r_{4t_4}}}
         \end{array}\right]
\end{equation*}
in which $r_{21} \leqslant \dots \leqslant r_{2t_2}$ and  $r_{41} \leqslant \dots \leqslant r_{4t_4}$.

The matrix $N_{24}$ is contained in the following submatrix of $\wt A  B$:
\begin{equation*}
         \left[\begin{array}{ccc|ccc}
  J_{r_{21}}(0)&&0&H_{r_{21}r_{41}}&\dots&H_{r_{21}r_{4t_4}}\\
  &\ddots&&\vdots&&\vdots\\
  0&&J_{r_{2t_2}}(0)&H_{r_{2t_2}r_{41}}&\dots&H_{r_{2t_2}r_{4t_4}} \\ \hline
                   &&&G_{r_{41}}&&0\\
                   &0&&&\ddots\\
                    &&&0&&G_{r_{4t_4}}\\
                 \end{array}\right].
\end{equation*}

Each $H_{r_{2i}r_{4j}}F_{{r_{4j}}}$ has the form
\begin{equation*}
 \left[\begin{tabular}{c}
                        \\[-1mm]
                       \Large 0 \\
                       $\alpha_{r_{41}} \ \cdots \ \alpha_{r_{4j}-1}$
 \end{tabular}\right] \text{if } r_{4j} \le r_{2i}, \quad
 \left[ \begin{tabular}{cc}
             $\alpha_{r_{21}}$& \\
             $\vdots$&\!\!\!\Large 0\\
             $\alpha_{r_{2i}}$&
\end{tabular}
\right]
\text{if } r_{4j} > r_{2i}.
\end{equation*}
We see that $\alpha_{r_{4j}}$ disappears if $r_{4j} \le r_{2i}$ and all parameters remain if $r_{4j} > r_{2i}$,
thus we have the inequalities
$r_{21} \leqslant \dots \leqslant r_{2t_2} < r_{41} \leqslant \dots \leqslant r_{4t_4}$,
which
gives the second inequality in \eqref{theorem}.

\medskip

{\it Case 3: consider $\overline{Q}_4N'_{41}$.}
By analogy with Case 2, we get the inequalities
$r_{11} \leqslant \dots \leqslant r_{1t_1} < r_{41} \leqslant \dots \leqslant r_{4t_4}$, which
gives the third inequality in \eqref{theorem}.

{\it Case 4: consider $N_{34}\overline{P}_4$.}  The matrix $N_{34}$ is contained in the following submatrix of $\wt A B$:
\begin{equation*}
         \left[\begin{array}{ccc|ccc}
  F_{r_{31}}&&0&H_{r_{31}r_{41}}&\dots&H_{r_{31}r_{4t_4}}\\
  &\ddots&&\vdots&&\vdots\\
  0&&F_{r_{3t_3}}&H_{r_{3t_3}r_{41}}&\dots&H_{r_{3t_3}r_{4t_4}} \\ \hline
                   &&&G_{r_{41}}&&0\\
                   &0&&&\ddots\\
                    &&&0&&G_{r_{4t_4}}\\
                 \end{array}\right].
\end{equation*}
We get the inequalities
$r_{31} \leqslant \dots \leqslant r_{3t_3} < r_{41} \leqslant \dots \leqslant r_{4t_4}$, which
gives the forth inequality in \eqref{theorem}.

\medskip

{\it Case 5: consider the matrix}
\begin{equation*}
N_{23}\overline{Q}_3=
\left[\begin{array}{ccc}
H_{r_{21}r_{31}}&\dots&H_{r_{21}r_{3t_3}}\\[-4pt]
    \dots&\dots&\dots\\
         H_{r_{2t_2}r_{31}}&\dots& H_{r_{2t_2}r_{3t_3}}
         \end{array}\right]
         \left[\begin{array}{ccc}
G_{{r_{31}}}& &0\\[-4pt]
    &\ddots&\\
         {0}&&G_{{r_{3t_3}}}
         \end{array}\right]
=
 \end{equation*}
         \begin{equation*}
\left[\begin{array}{ccc}
H_{r_{21}r_{31}}G_{{r_{31}}}& \dots&H_{r_{21}r_{3t_3}}G_{r_{3t_3}}\\[-4pt]
    \dots&\dots&\dots\\
         H_{r_{2t_2}r_{31}}G_{r_{31}}&&H_{r_{2t_2}r_{3t_3}}G_{{{r_{3t_3}}}}
         \end{array}\right]
\end{equation*}
in which $r_{21} \leqslant \dots \leqslant r_{2t_2}$ and $r_{31} \leqslant \dots \leqslant r_{3t_3}$.
The matrix $N_{23}$ is contained in the following submatrix of $\wt A B$:
\begin{equation*}
         \left[\begin{array}{ccc|ccc}
  J_{r_{21}}(0)&&0&H_{r_{21}r_{31}}&\dots&H_{r_{21}r_{3t_3}}\\
  &\ddots&&\vdots&&\vdots\\
  0&&J_{r_{2t_2}}(0)&H_{r_{2t_2}r_{31}}&\dots&H_{r_{2t_2}r_{3t_3}} \\ \hline
                   &&&F_{r_{31}}&&0\\
                   &0&&&\ddots\\
                    &&&0&&F_{r_{3t_3}}\\
                 \end{array}\right].
\end{equation*}
Each $H_{r_{2i}r_{3j}}G_{r_{3j}}$ has the form
\begin{equation*}
 \left[\begin{tabular}{cc}
                        0&\\[-1mm]
                       \vdots&\Large 0 \\
                       0&$\alpha_{r_{21}} \ \cdots \ \alpha_{r_{2i}}$
 \end{tabular}\right] \text{if } r_{3j} \le r_{2i}, \quad
 \left[
\begin{tabular}{ccc}
             0&$\alpha_{r_{31}}$& \\
             \vdots&$\vdots$&\!\!\!\Large 0\\
             0&$\alpha_{r_{3j}}$&
\end{tabular}
\right]
\text{if } r_{3j} > r_{2i}.
\end{equation*}
We find that all parameters are preserved.
\medskip

{\it Cases 6 and 7: consider the matrices $\overline{Q}_4N'_{41}$ and $\overline{P}_3N'_{31}$.}
We find that all parameters are preserved too.

Finally, we get that $\wt AB \neq 0$ for all $\wt A \neq 0$ and $\wt BA \neq 0$ for all $\wt B \neq 0$ if $(A,B)$ has the form
$(A,B)=$
\begin{equation*}
\Big(I,\bigoplus_i\Phi({\lambda_i})\Big)\oplus
\bigoplus_{j=1}^{t_1}
(I_{r_{1j}},J_{r_{1j}})
\oplus\bigoplus_{j=1}^{t_2}(J_{r_{2j}},I_{r_{2j}})
\oplus\bigoplus_{j=1}^{t_3}(F_{r_{3j}},G_{r_{3j}})
\oplus\bigoplus_{j=1}^{t_4}(G_{r_{4j}},F_{r_{4j}})
\end{equation*}
in which
$r_{1t_1} < r_{21}$ if $t_1t_2 \ne 0$,
$r_{2t_2} < r_{41}$ if $t_2t_4 \ne 0$, and
$r_{3t_3} < r_{41}$ if $t_3t_4 \ne 0$.

\end{proof}

\end{document}